\documentclass[11pt]{amsart}

\usepackage{mathtools}

\newtheorem{theorem}{Theorem}[section]
\newtheorem{proposition}[theorem]{Proposition}

{
\theoremstyle{definition}

}

\newcommand{\Auto}{{\rm Aut}}
\newcommand{\Endo}{{\rm End}}
\newcommand{\Img}{{\rm Im}}

\begin{document}

\title[Posets of width 4]{The Endomorphism Conjecture for Graded Posets with Whitney numbers at most 4}

\author{Mikl\'os B\'ona}
\address{Department of Mathematics, University of Florida, 358 Little Hall, PO Box 118105,
Gainesville, FL 32611-8105}
\email{bona@ufl.edu}
\thanks{The first author is partially supported by Simons Collaboration Award 940024.}

\author{Ryan R. Martin}
\address{Department of Mathematics, Iowa State University, 411 Morrill Road, Ames, IA, 50011-2104}
\email{rymartin@iastate.edu}
\thanks{The second author is partially supported by Simons Collaboration Award 709641.}

\begin{abstract} We prove the endomorphism conjecture for graded posets whose largest Whitney number is at most 4. In particular, this implies the endomorphism conjecture is true for graded posets of width at most 4. 
\end{abstract}


\begin{abstract} 
	We prove the endomorphism conjecture for graded posets with largest Whitney number at most 4.
\end{abstract}

\maketitle

\section{Introduction}
\hyphenation{Rut-kow-ski}
\subsection{The endomorphism conjecture}
 Let $P$ be any finite partially ordered set.  An {\em automorphism} of $P$ is an order-preserving bijection $f:P\rightarrow P$. That is,  if $a\leq b$, then  $f(a)\leq f(b)$.  An {\em endomorphism} of $P$ is a map  $g:P\rightarrow P$  which preserves order, but is not necessarily bijective. 
The automorphisms of $P$ form the group $\Auto(P)$, the endomorphisms of $P$ form the monoid $\Endo(P)$. Let $P_1,P_2,\ldots$ be an infinite sequence of distinct finite posets. 
The question as to whether
 \[\lim_{i\rightarrow \infty} \frac{|\Auto(P_i)|}{|\Endo(P_i)|}=0\] 
for {\em all} such sequences of posets was raised by I. Rival and A. Rutkowski~\cite{rival} and is still open. 
It is widely believed that the answer to this question is in the affirmative.
This is known as the {\em endomorphism conjecture}, though some authors call it the {\em automorphism conjecture}. 

\subsection{Earlier results} 
The endomorphism conjecture has been proved for lattices~\cite{liu-wan}, the direct products of certain posets~\cite{kuzjurin}, and  series-parallel posets \cite{schroder}. 
A recent addition to the literature is the paper~\cite{schroder-1} that proves rapid convergence for posets of 
dimension 2. 

In \cite{bona}, one of the present authors proved the endomorphism conjecture for sequences of finite posets with a minimum element for which there is a positive integer $k$
so that the number of elements of height $k$ goes to infinity. This implies, for instance, that the conjecture holds for posets with a 0 that have bounded height. 

Shortly after an earlier version of the present paper appeared, Bernd Schr\"oder \cite{bsch}  proved the endomorphism conjecture for posets of whidth at most 11. Though that result is stronger, the proof presented here is significantly shorter and simpler.

We construct endomorphisms from {\em each} automorphism of a poset, by shrinking two or more elements into 
one. For this approach to work, we will either need pairs elements $x$ and $y$  (``siblings") so that $x$ covers every element that $y$ covers, and $x$ is covered by every element that $y$ is covered by, or an element (a ``central element")  that is smaller than  every element of a given higher rank and larger than every element of a given lower rank.

\section{Tools}

Note that the following statement is equivalent to the endomorphism conjecture, even though it may look weaker at first sight. 
Let $P_1,P_2,\cdots$ be an infinite sequence of distinct finite posets. 
Denote $a_i=\frac{|\Auto(P_i)|}{|\Endo(P_i)|}$.  
Then 
\begin{equation} \label{notweaker}  
	\liminf a_i =0.
\end{equation}
If the endomorphism conjecture holds, then it is obvious that (\ref{notweaker}) also holds. 
On the other hand, if the endomorphism conjecture is false, then there is a  sequence $P_i$ of distinct finite posets for which there exists $\epsilon > 0$ so that for each element of an infinite subsequence $i_1 < i_2< \cdots $, the inequality $a_{i_j} \geq \epsilon$ holds. 
Therefore, in this case, $\liminf a_i \neq 0 $, and so (\ref{notweaker}) is false as well. 
We will make use of this observation in this paper several times; we will prove the endomorphism conjecture for various classes of posets by proving (\ref{notweaker}) for them. 

All posets in this paper are finite. 
A {\em graded poset} is a poset $P$ in which all maximal (nonextendible) chains have the same length.
If $P$ is graded,  then it has  rank function $r$ so that $r(y)=r(x)+1$ if $y$ covers $x$, with minimal elements of $P$ having rank 0. 
Let $w_i=w_i(P)$ denote the number of elements of rank $i$ in $P$. The numbers $w_i$ are called the {\em Whitney numbers}. 
For convenience, we define the {\bf whidth} of $P$ as the largest of the Whitney numbers $w_i(P)$. Note that as for all $i$, the elements of rank $i$ form an antichain in $P$,
 the whidth of $P$ is at most as large as the {\em width} of $P$, which is the size of the largest antichain of $P$.  For this reason, our results apply to a class of posets
that properly contains the class of all graded posets of width four. 

The rank of a graded poset $P$ is the rank of its maximal elements. 
For instance, a chain that consists of $n+1$ elements has rank $n$.  The height of a  poset is the number of elements in its longest chain. So if $P$ is graded,
its height is one larger than its rank.

As all posets in this paper have whidth at most four, we can assume without loss of generality that they have a connected Hasse
diagram. Indeed, a poset of whidth four does not have a connected Hasse diagram, then its Hasse diagram can have at most four components,
and so the size of its automorphism group is at most 24 times the product of the sizes of the automorphism groups of its components.

If $P$ is a graded poset and $T$ is a set of nonnegative integers, then the {\em rank-selected subposet} $P_T$ of $P$ is an induced subposet of $P$ that consists of all elements of $P$ whose ranks are in the set $T$. 
For instance, all two- and five-element subsets of the $n$-element set form the rank-selected subposet $P_{2,5}$ of the Boolean algebra $B_n$.  

For a positive integer $k$, let $W_k$ be the set of all finite graded posets that have whidth at most $k$. 
So the Hasse diagram of a poset in $W_k$ is similar to a tower. 
For instance, $W_1$ is the set of all chains.

We start by showing that in infinite sequences of posets in $W_k$, we will find an unbounded number of disjoint copies of some subposet. 

\begin{proposition} \label{repeating}
	Let $k$ and $m$ be  positive integers.
	Let $C$ be an infinite sequence of distinct posets in $W_k$. 

	Then there exists a graded poset $R$ of rank $m$, and an infinite subsequence $C'$ of $C$ so that the following holds for  $C'$. 
For a given element poset $P\in C'$, let $c(P)$ the largest positive integer so that $P$ contains $c(P)$ disjoint copies of $R$ as a rank-selected subposet defined by consecutive integers
as ranks. Then $c(P)$ goes to infinity as $P=P_1,P_2,\cdots$ ranges over $C'$.
\end{proposition}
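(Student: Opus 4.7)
The plan is to combine a simple finiteness observation with two pigeonhole arguments. The key underlying fact is that the collection $\mathcal{R}$ of isomorphism types of graded posets of rank $m$ with whidth at most $k$ is \emph{finite}, since any such poset has at most $k(m+1)$ elements and hence only finitely many possible Hasse diagrams.

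First, I would argue that the ranks $N(P_i) := \mathrm{rank}(P_i)$ must tend to infinity along $C$. Indeed, for every fixed $N$ there are only finitely many posets in $W_k$ of rank $N$ (they have at most $k(N+1)$ elements), whereas the $P_i$ are distinct. So, after passing to a subsequence, we may assume $N(P_i)\to\infty$.

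Next, for each $P\in C$ of rank $N$, partition the set of ranks $\{0,1,\dots,N\}$ into $b(P):=\lfloor (N+1)/(m+1)\rfloor$ consecutive blocks of length $m+1$ (dropping a possible remainder). Each block $T$ gives a rank-selected subposet $P_T$ of rank $m$, which lies in $\mathcal{R}$; and subposets coming from different blocks are automatically element-disjoint because they sit on disjoint rank sets. By pigeonhole over $\mathcal{R}$, some type $R_P\in\mathcal{R}$ is achieved by at least $\lceil b(P)/|\mathcal{R}|\rceil$ of the blocks, producing that many pairwise disjoint copies of $R_P$ inside $P$.

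Finally, I would apply pigeonhole a second time to the infinite sequence of types $R_{P_i}$ in the finite set $\mathcal{R}$: some fixed $R\in\mathcal{R}$ occurs as $R_{P_i}$ for infinitely many $i$, and we let $C'$ be the subsequence along these indices. Along $C'$ we then have
\[
c(P)\;\geq\;\bigl\lceil b(P)/|\mathcal{R}|\bigr\rceil,
\]
which tends to infinity since $N(P)\to\infty$ on $C'$. I do not expect a real obstacle here; the only delicate point is that the copies must be element-disjoint, and this is built into the construction because the underlying rank-blocks are disjoint from the start, so no subtler combinatorics (e.g., a sliding-window argument that would require extra spacing) is needed.
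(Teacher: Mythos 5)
Your proposal is correct and follows essentially the same route as the paper: both rest on the finiteness of the set of rank-$m$, whidth-$\leq k$ graded posets together with a pigeonhole over consecutive-rank windows of the $P_i$. You are merely more explicit than the paper on two points it glosses over --- using disjoint rank-blocks (rather than all sliding windows) to guarantee element-disjoint copies, and the second pigeonhole across the sequence to fix one type $R$ for the subsequence $C'$.
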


If, in a sequence $C$ of posets,  a rank-selected subposet $R$ defined by consecutive ranks has the property described in Proposition~\ref{repeating}, then we will say that $R$ is a {\em repeating} subposet in $C$. 

\begin{proof}
	The size of the elements of $C$ goes to infinity. 
	Therefore, the total number of rank-selected subposets of the poset $P_i$ that consist of $m+1$ consecutive ranks also goes to infinity. 
	On the other hand, the number of possible posets of rank $m$ is bounded, as the number of elements of any given rank is bounded by $k$. 
	So by the pigeon-hole principle, one such poset must occur an unbounded number of times, which implies our claim. 
\end{proof}

Let us call an element $x\in P$ {\em up-single} if it is covered by exactly one element in $P$. 
Similarly, let us say that $x\in P$ is {\em down-single} if $x$ covers exactly one element in $P$. 
The next proposition shows that up-single and down-single elements are very useful for finding endomorphisms. 

\begin{proposition} \label{singles} 
	Let $k$ and $m$ be  positive integers.
	Let $C$ be an infinite sequence of distinct posets in $W_k$. 
	Let $R$ be a repeating subposet in an infinite subsequence $C'$ of $C$. 

	If $R$ contains an up-single or a down-single element, then 
	\[ \lim_{i\rightarrow \infty} \frac{|\Auto(P_i)|}{|\Endo(P_i)|} =0, \]
	where the $P_i$ are elements of the sequence $C'$. 
\end{proposition}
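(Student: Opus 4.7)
The plan is to show that for each $P = P_i$ in the sequence $C'$ one has $|\Auto(P)|/|\Endo(P)| \le 2^{-c(P)}$, which suffices since $c(P_i) \to \infty$. The idea is to exploit the repeating single element of $R$ to manufacture, for every automorphism $\phi$ of $P$, a family of $2^{c(P)}$ distinct endomorphisms $f_S \circ \phi$ indexed by subsets $S$ of the copies of $R$ inside $P$.

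By reversing the order on $P$ (which swaps up-singles with down-singles) I may reduce to the case where $R$ contains an up-single element. In each of the $n := c(P)$ disjoint rank-selected copies $R_1, \ldots, R_n$ of $R$, let $x_j \in R_j$ denote the copy of this distinguished element and let $y_j$ be its unique cover in $P$. A key structural observation to record is that, because the $R_j$ occupy pairwise disjoint intervals of consecutive ranks, the elements $x_1, \ldots, x_n$ lie at pairwise distinct ranks of $P$; consequently the $y_j$ are also pairwise distinct and none of them equals any $x_k$.

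For each $S \subseteq \{1, \ldots, n\}$, I would define $f_S \colon P \to P$ by $f_S(x_j) = y_j$ for $j \in S$ and $f_S(p) = p$ otherwise. Verifying that $f_S$ is an endomorphism reduces to the pairs involving some $x_j$ with $j \in S$, and is immediate from the up-single property, since any element strictly above $x_j$ is automatically $\ge y_j$. I would then show that the assignment
\[
\Auto(P) \times 2^{\{1,\dots,n\}} \longrightarrow \Endo(P), \qquad (\phi, S) \longmapsto f_S \circ \phi
\]
is injective. Given $f_S \circ \phi_1 = f_{S'} \circ \phi_2$, set $\beta := \phi_2 \phi_1^{-1}$, so that $f_S = f_{S'} \circ \beta$. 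Evaluating at any $p \notin \{x_1,\ldots,x_n\}$ forces $\beta(p) = p$ (the alternative $\beta(p) = x_k$ with $k \in S'$ would give $p = y_k$, impossible since $\beta$ preserves rank). Evaluating at each $x_j$ and again using rank preservation together with the distinct-ranks property of the $x_k$ then forces $\beta(x_j) = x_j$ along with $j \in S \Leftrightarrow j \in S'$. Hence $\beta = \mathrm{id}$, $S = S'$, and $\phi_1 = \phi_2$, giving $|\Endo(P)| \ge 2^{c(P)} \cdot |\Auto(P)|$.

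The step I expect to be the main obstacle is precisely this injectivity check: a priori one must rule out a nontrivial automorphism $\beta$ that secretly permutes some of the $x_j$'s among themselves and thereby creates a coincidence $f_S \circ \phi = f_S \circ (\phi\beta)$. The feature that saves the argument is that disjointness of the rank-selected copies of $R$ pins the $x_j$'s down at pairwise distinct ranks of $P$, so any rank-preserving bijection must fix each $x_j$ individually.
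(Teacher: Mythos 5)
Your proof is correct, and at its core it runs on the same engine as the paper's: push the up-single element $x_j$ onto its unique cover $y_j$ and compose the resulting contraction with each automorphism. The differences are in the bookkeeping, and they are worth noting. The paper collapses only \emph{one} copy of $R$ at a time, obtaining $u$ endomorphisms per automorphism and the bound $|\Auto(P_i)|/|\Endo(P_i)|\leq 1/u$; it separates the maps $U_x\circ\phi$ and $U_{x'}\circ\phi'$ for $x\neq x'$ by comparing images ($\Img(U_x\circ\phi)=P_i\setminus\{x\}$), and for $x=x'$ by finding a point where $\phi$ and $\phi'$ disagree. You instead collapse an arbitrary subset $S$ of the $n=c(P)$ copies simultaneously, which gives the quantitatively stronger bound $2^{-c(P)}$ in place of $1/c(P)$ (both of course suffice for the limit). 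Your injectivity argument is also structured differently: reducing to $f_S=f_{S'}\circ\beta$ with $\beta=\phi_2\phi_1^{-1}$ and invoking rank-preservation of automorphisms of a graded poset, together with the fact that the $x_j$ sit at pairwise distinct ranks (disjoint windows of $m+1\geq 2$ consecutive ranks), pins $\beta$ to the identity. This is in fact slightly more careful than the paper's treatment: to rule out $U_x\circ\phi=U_x\circ\phi'$ for $\phi\neq\phi'$ one must exclude disagreements with $\{\phi(z),\phi'(z)\}=\{x,y\}$, and the clean way to do so is exactly your observation that $r(x)\neq r(y)$ while automorphisms preserve rank --- a point the paper's wording glosses over. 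One small detail you assume implicitly (as does the paper) is that an element that is up-single in the abstract poset $R$ is up-single in $P$; this holds because an up-single element of $R$ cannot lie at the top rank of $R$, so all of its covers in $P$ lie inside the same window of consecutive ranks.
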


\begin{proof}
	Without loss of generality, let us assume that $R$ contains an up-single element.
 Let us choose a copy $R_1$ of $R$ in $P_i$, and let $x$ be an up-single element in $R_1$.
	Let $y$ be the only element of $P_i$ covering $x$. 
	We define the endomorphism $U_x$ of $P_i$ as follows.
	\begin{equation*}
		U_x(z) 	= 	\begin{cases*}
						z 	& if $z\neq x$ \\
						y 	& if $z=x$.
					\end{cases*}
	\end{equation*}

	Let $\phi \in \Auto(P_i)$. 
	Then $\phi_x=U_x\circ \phi \in\Endo(P_i)$. 

Now we show that $\phi_x\neq \phi_{x'}'$ unless $x=x'$ and $\phi=\phi'$.
	Indeed, if $x\neq x'$, then $\phi_x \neq \phi_{x'}'$ regardless of whether  $\phi$ and $\phi'$ are identical or not as $\Img(\phi_x) \neq \Img(\phi_{x'})$. 
	Indeed, the left-hand side does not contain $x$, the right-hand side does. 
	On the other hand, if $\phi\neq \phi'$, then $\Img(\phi)$ and $\Img(\phi')$ differ in the image of at least one element $z\neq x$, and therefore,
  $U_x\circ \phi  \neq U_x\circ \phi'$ because   $(U_x\circ \phi) (z)  \neq (U_x\circ \phi')(z)$.

	This means that if $P_i$ has $u$ copies of $R$, then $P_i$ has at least $u$ up-single elements that can play the role of $x$, implying that 
	\[ \frac{|\Auto(P_i)|}{|\Endo(P_i)|} \leq \frac{1}{u} . \] 
	As $u$ goes to infinity, this proves our claim. 
\end{proof}

Let $a$ and $b$ be two elements of $P$ that have the same rank. 
We say that $b$ is an {\em older sibling} of $a$ if $b$ covers all elements that $a$ covers, and $b$ is covered by all elements that cover $a$.  Note that this means it is possible that $b$ is an older sibling of $a$ and $a$ is an 
older sibling of $b$. If that happens, we will say that $a$ and $b$ are {\em twins}. 

The next proposition shows the usefulness of this definition. 

\begin{proposition} \label{siblings}
	Let $k$ and $m$ be  positive integers.
	Let $C$ be an infinite sequence of distinct posets in $W_k$. Let $R$ be a repeating subposet in an infinite subsequence $C'$ of $C$. 

	If $R$ contains a pair $(a,b)$ so that $b$ is an older sibling of $a$,  then  
	\[ \lim_{i\rightarrow \infty} \frac{|\Auto(P_i)|}{|\Endo(P_i)|} =0 , \]
	where the $P_i$ are elements of the sequence $C'$. 
\end{proposition}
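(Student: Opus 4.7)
The plan is to mimic the proof of Proposition~\ref{singles}, but instead of collapsing an up-single element $x$ into its unique upper cover, I would collapse $a$ into its older sibling $b$. Concretely, for each pair $(a,b)$ with $b$ an older sibling of $a$, define the map $S_{a,b}\colon P_i\to P_i$ by $S_{a,b}(a)=b$ and $S_{a,b}(z)=z$ for $z\neq a$. The first order of business is to verify that $S_{a,b}$ is order-preserving. For any $w>a$, there is an element $x$ covering $a$ with $x\leq w$; by the older-sibling property $b<x\leq w$, so $b\leq w$. Dually, for any $z<a$, there is an element $y$ covered by $a$ with $z\leq y$; then $z\leq y<b$. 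Combined with the identity behavior on the rest of $P_i$, this shows $S_{a,b}\in\Endo(P_i)$.

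Next, suppose $P_i$ contains $u=c(P_i)$ disjoint copies of $R$, and fix in each copy a pair $(a_j,b_j)$ where $b_j$ is an older sibling of $a_j$ (this pair exists by hypothesis on $R$). Since the copies of $R$ are disjoint, the elements $a_1,\ldots,a_u$ are distinct. For any $\phi\in\Auto(P_i)$, the composition $S_{a_j,b_j}\circ\phi$ is an endomorphism of $P_i$ whose image is exactly $P_i\setminus\{a_j\}$. Thus endomorphisms coming from different indices $j$ are distinguishable by their images and are therefore distinct.

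The main subtlety, and the only place the argument differs meaningfully from the up-single case, is controlling how many automorphisms can yield the same endomorphism for a fixed $j$. If $S_{a_j,b_j}\circ\phi=S_{a_j,b_j}\circ\phi'$, then $\tau:=\phi'\circ\phi^{-1}$ is an automorphism satisfying $S_{a_j,b_j}\circ\tau=S_{a_j,b_j}$, which forces $\tau$ to fix every element outside $\{a_j,b_j\}$ and to send $\{a_j,b_j\}$ to itself. Thus $\tau$ is either the identity or the transposition swapping $a_j$ and $b_j$. The latter is itself an automorphism only when $a_j$ and $b_j$ are twins, in which case at most two automorphisms collide. In every case the map $\phi\mapsto S_{a_j,b_j}\circ\phi$ is at most $2$-to-$1$.

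Putting these pieces together gives $|\Endo(P_i)|\geq (u/2)\,|\Auto(P_i)|$, so $|\Auto(P_i)|/|\Endo(P_i)|\leq 2/u$, and since $u=c(P_i)\to\infty$ by Proposition~\ref{repeating}, the ratio tends to $0$. The only real obstacle is the twin case just described, but the factor of $2$ loss is harmless; no new ideas beyond those in Proposition~\ref{singles} are needed once the order-preservation check is done correctly using both halves of the older-sibling definition.
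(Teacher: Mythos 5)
Your proposal is correct and follows essentially the same route as the paper: the paper defines the identical collapsing map (called $V_{a,b}$ there), distinguishes endomorphisms across copies by their images, and makes the same observation that the map $\phi\mapsto V_{a,b}\circ\phi$ can be $2$-to-$1$ only when $a$ and $b$ are twins. Your explicit verification that $S_{a,b}$ is order-preserving (using both halves of the older-sibling definition) is a detail the paper leaves implicit, and your bound $2/u$ versus the paper's $1/c$ makes no difference to the conclusion.
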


\begin{proof}
	We define the endomorphism $V_{a,b}$ of $P_i$ as follows.
	\begin{equation*}
		V_{a,b}(z) 	= 	\begin{cases*}
						z 	& if $z\neq a$ \\
						b 	& if $z=a$.
					\end{cases*}
	\end{equation*}

	Let $\phi \in \Auto(P_i)$. 
	Then $\Phi_{a,b}=V_{a,b}\circ \phi \in \Endo(P_i)$.  
	If $(a,b)$ and $(a',b')$ are two pairs of  elements of $P_i$ so that $b$ is an older sibling of $a$ and $b'$ is an older sibling of $a'$, and $a\neq a'$,  then $\Phi_{a,b}\neq \Phi_{a',b'}$ as $\Img(\Phi_{a,b}) \neq
\Img(\Phi_{a',b'})$. 
	On the other hand, for a given pair $(a,b)$, the equality
	\begin{equation} \label{coincidence} 
		V_{a,b}\circ \phi = V_{a,b} \circ \phi' 
	\end{equation} 
	cannot hold for the automorphisms $\phi\neq \phi'$ of $P_i$ if there exists an element $z\in P_i$ so that $\phi(z)\neq \phi'(z)$ and $\phi(z)\notin \{a,b\}$. Indeed, if 
 $s=\phi(z)\notin \{a,b\}$, then $s$ is the {\em unique} preimage of $s=V_{a,b}(s)$ under $V_{a,b}$. In other words, $\{s\}=V_{a,b}^{-1}(s)$.

In other words, for (\ref{coincidence}) can hold only if $\phi(z)\neq \phi'(z)$ implies that $\{\phi(z),\phi'(z) \} = \{a,b\}$.  Therefore, for every $\phi \in \Auto(P_i)$, there is only one $\phi' \in \Auto(P_i)$ other than $\phi$ itself so that (\ref{coincidence}) holds for a given pair $\{a,b\}\in P_i$. Specifically, if $\phi(v)=a$ and $\phi(w)=b$, then $\phi \neq \phi'$ implies
that $\phi'(v)=b$ and $\phi'(w)=a$. Note that  this can only happen if $a$ an $b$ are twins.  


	Therefore, if there are $c$ disjoint copies of  $R$ in $P_i$, and $a$ and $b$ are not twins, then 
there are at least $c$ candidates for $a$, implying that there exist at least $c|\Auto(P_i)|$ endomorphisms of the form $V_{x,y} \circ \phi $. If $a$ and $b$ are twins, then 
there are at least $2c$ candidates for $a$, implying that the multiset of all endomorphisms of the form $V_{x,y} \circ \phi $ is of size at least $2c|\Auto(P_i)|$, and none of its elements have multiplicity more than two, therefore
	\[ \frac{|\Auto(P_i)|}{|\Endo(P_i)|} \leq \frac{2}{2c}=\frac{1}{c} . \] 
	As $c$ goes to infinity, this proves our claim. 
\end{proof}



\section{whidth 2}
The tools developed in the last section make it easy to prove the endomorphism conjecture for posets of whidth two. 

\begin{theorem} \label{whidth2} 
	Every infinite sequence of distinct posets in $W_2$ contains an infinite subsequence $P_1,P_2,\ldots $ of posets so that  
	\[ \lim_{i\rightarrow \infty} \frac{|\Auto(P_i)|}{|\Endo(P_i)|} =0 . \]
\end{theorem}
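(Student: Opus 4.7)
The plan is to apply Proposition~\ref{repeating} with $k=2$ and $m=3$ to extract, along some infinite subsequence $C' \subseteq C$, a fixed graded poset $R$ of rank $3$ (so $R$ has four ranks $0,1,2,3$, each of size $1$ or $2$) which appears in each $P_i \in C'$ as $c(P_i) \to \infty$ disjoint rank-selected copies. A short finite case analysis will then show that any such $R$ must contain an up-single element, a down-single element, or a pair of twins located at one of the two interior ranks $1$ and $2$. Once such a witness is exhibited, Proposition~\ref{singles} or Proposition~\ref{siblings} applies and gives $\lim_{i\to\infty} |\Auto(P_i)|/|\Endo(P_i)| = 0$ along $C'$.

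The reason I insist on placing the witness at an interior rank is that up-single, down-single, and older sibling are defined with respect to the ambient poset $P_i$, whereas the case analysis only sees the abstract structure of $R$. For any element at an interior rank of a rank-selected copy $P_{[j, j+3]}$, however, every element that covers it or is covered by it in $P_i$ already lies inside the copy, so the three properties can be read off from $R$ and propagate identically to all $c(P_i)$ disjoint copies in $P_i$.

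For the case analysis I split on $w_1(R)$. If $w_1 = 1$, the unique rank-$1$ element $x$ is covered by every rank-$2$ element of $R$ and covers every rank-$0$ element of $R$, so $x$ is up-single whenever $w_2 = 1$ and down-single whenever $w_0 = 1$; in the remaining subcase $w_0 = w_2 = 2$, both rank-$2$ elements cover only $x$ and are down-single at the interior rank $2$. If $w_1 = 2$ with elements $x_1, x_2$, then avoiding an up-single at rank $1$ forces $w_2 = 2$ together with a complete bipartite cover structure between ranks $1$ and $2$, and avoiding a down-single at rank $1$ forces $w_0 = 2$ together with a complete bipartite cover structure between ranks $0$ and $1$; taken together these conditions make $x_1$ and $x_2$ twins at the interior rank $1$, which is the required sibling pair. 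The one thing to keep straight is the ``in $P_i$'' versus ``in $R$'' phrasing of up/down-singleness and older-sibling, and choosing $m = 3$ provides just enough interior room to arrange the witness cleanly in each case---there is no deeper obstacle.
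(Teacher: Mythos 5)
Your proof is correct and follows essentially the same route as the paper: extract a repeating rank-selected subposet via Proposition~\ref{repeating}, then show by a finite case analysis that it must contain an up-single element, a down-single element, or a pair of twins, so that Proposition~\ref{singles} or Proposition~\ref{siblings} applies. The only difference is cosmetic---you take $m=3$ and place the witnesses at interior ranks to handle the ``in $P_i$ versus in $R$'' cover issue explicitly, whereas the paper takes a rank-$2$ repeating subposet and leaves that bookkeeping implicit.
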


\begin{proof} 
	By Proposition~\ref{repeating}, our sequence contains an infinite subsequence $P_1,P_2,\ldots $ of posets that contain an unbounded number of disjoint copies of a repeating poset $R$ of rank 2. 
	If $R$ contains an up-single or down-single element, then our claim follows from Proposition~\ref{singles}. 
	If not, then the two elements of rank 1 in $R$ are twins, and our claim follows from Proposition~\ref{siblings}. 
\end{proof}

\section{whidth 3}

\begin{theorem}  \label{whidth3} 
	Every infinite sequence of distinct posets in $W_3$ contains an infinite subsequence $P_1,P_2,\ldots $ of posets so that  
	\[ \lim_{i\rightarrow \infty} \frac{|\Auto(P_i)|}{|\Endo(P_i)|} =0 . \]
\end{theorem}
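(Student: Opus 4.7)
The plan is to apply Proposition~\ref{repeating} with $m=2$ to obtain a repeating rank-$2$ subposet $R$ in an infinite subsequence of $C$, then analyze the finitely many possible rank-size triples $(a_0,a_1,a_2)\in\{1,2,3\}^3$ of $R$. The goal in each case is to reduce to Proposition~\ref{singles}, Proposition~\ref{siblings}, or to a new central-element argument.

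The cleanest case is $a_1=1$. Since the Hasse diagram of $P_i$ is connected (assumed without loss of generality) and every non-maximal element of a graded poset has a cover in the next rank, the unique middle element $x$ of $R$ must cover every element of the bottom rank and be covered by every element of the top rank of $R$; thus $x$ is a central element of $P_i$. With $u$ disjoint copies of $R$, we obtain central elements $x_1,\dots,x_u$ at distinct ranks $r_1<\dots<r_u$. For each $k$, define
\[
f_k(z) = \begin{cases} x_k & \text{if } r(z)=r_k+1,\\ z & \text{otherwise.}\end{cases}
\]
Centrality of $x_k$ makes $f_k$ order-preserving, and its image omits every rank-$(r_k+1)$ element distinct from $x_k$, so $f_k$ is a proper endomorphism. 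For any $\phi\in\Auto(P_i)$, the composition $f_k\circ\phi$ is also an endomorphism; two such compositions for the same $k$ coincide only when the two automorphisms agree outside rank $r_k+1$, and the number of automorphisms of $P_i$ fixing everything outside one rank of size at most $3$ is at most $3!=6$. Since different $k$ yield distinct images, the $f_k\circ\phi$ give at least $u\,|\Auto(P_i)|/6$ distinct proper endomorphisms, so $|\Auto(P_i)|/|\Endo(P_i)|\le 6/(u+6)\to 0$.

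For $a_1\in\{2,3\}$, I would enumerate the remaining triples. For $a_1=2$, requiring that neither middle element be up- or down-single forces each middle element to have both an up-set and a down-set of size at least $2$; avoiding a sibling pair then forces the two up-sets or the two down-sets to differ. A short combinatorial check shows that in each such configuration some top or bottom element of $R$ ends up covered by (or covering) exactly one middle element, so Proposition~\ref{singles} applies. The $a_1=3$ case is handled analogously in most sub-configurations, via the same principle that distinctness of up-sets or down-sets among the three middles forces a singleton somewhere at the boundary of $R$.

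The main obstacle is the symmetric ``triangle'' configuration at $a_1=3$ where the three middle elements have down-sets equal to the three $2$-element subsets of a size-$3$ bottom rank and up-sets patterned symmetrically on a size-$3$ top rank, so that no single or sibling appears inside $R$ itself. To break this case I expect to invoke Proposition~\ref{repeating} a second time with $m=3$, extending $R$ by one adjacent rank; a combinatorial argument on the extended subposet should expose either a sibling pair (among the original top or bottom elements of $R$, which already share their down-set or up-set within $R$) or a new central element one rank further out, completing the proof.
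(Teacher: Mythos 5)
Your $a_1=1$ case is essentially correct, and your $a_1=2$ sketch can be completed, but the proposal has a genuine gap exactly at what you yourself call the main obstacle: the symmetric configuration with $a_1=3$, where the three middle elements have distinct $2$-element down-sets and up-sets and no single or sibling appears. You defer this case to an unproven hope that widening the repeating window by one rank ``should expose either a sibling pair \ldots or a new central element,'' but that is the entire content of the theorem in the hard case, and the step would not obviously terminate: the complementation pattern (each element of rank $i+1$ covering the two elements of rank $i$ other than its own label) propagates through every rank, so a poset can realize your triangle configuration between \emph{every} pair of consecutive ranks, and no finite extension of the window ever produces an up-single, a down-single, or an older-sibling pair. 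So the enumeration strategy, as written, has no exit from this case.

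The missing idea --- and the paper's key step, which makes your entire case analysis on $(a_0,a_1,a_2)$ unnecessary --- is a pigeonhole observation: once no element of the repeating subposet is up-single or down-single, any two elements $s,t$ with $r(s)+2=r(t)$ satisfy $s<t$, because $s$ has at least two upper covers, $t$ has at least two lower covers, and the intermediate rank has at most three elements, so these sets must intersect. Consequently, in a repeating subposet $R$ of rank $4$, \emph{every} element $x$ of the middle rank is automatically central: $a<x<b$ for all minimal $a$ and maximal $b$ of the copy. The paper then contracts the interior $R_{1,int}$ of a copy onto $x$, obtaining an endomorphism $F_x$; distinct choices of $x$ yield distinct compositions $F_x\circ\phi$ (the element with multiple preimages differs), and $F_x\circ\phi=F_x\circ\phi'$ forces $\phi,\phi'$ to differ only by an automorphism of the rank-$2$, whidth-$3$ interior, of which there are at most $3!^3=216$. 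This gives $|\Auto(P_i)|/|\Endo(P_i)|\le 216/c$ with $c$ the number of disjoint copies, uniformly in all configurations. In particular, your ``triangle'' obstacle dissolves instantly --- in that configuration every middle element is already central by the pigeonhole step --- and your $a_1=1$ argument becomes the degenerate special case where centrality is trivial. To repair your proof you would need to supply precisely this comparability lemma (or an equivalent argument) for the $a_1=3$ case; with it in hand, the remaining case distinctions are redundant.
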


\begin{proof}
	We can assume that no repeating subposet in the $P_i$ contains an up-single or a down-single element; otherwise, our claim immediately follows from Proposition~\ref{singles}. 

	Now let $s$ and $t$ be two elements in a repeating subposet $R$ of the sequence of the $P_i$ so that $r(s)+2=r(t)$. 
	Then we claim that $s<t$.
	Indeed, $s$ is covered by at least two elements, $t$ covers at least two elements, but there are at most three elements of rank $r(s)+1$ in $P_i$, so there has to be at least one element $z$ that covers $s$ and is covered by $t$.

	Let $R$ be a repeating subposet of rank four for the sequence $P_1,P_2,\ldots $. Let $x$ be an element at the middle level in a copy $R_1$ of $R$ in $P_i$. 
	That is, $r(x)=2$ in $R_1$. 
	Then by the previous paragraph, for all minimal elements $a$ and all maximal elements $b$ of $R_1$, the chain of inequalities $a<x<b$ holds. 
	Now we define $F_x\in \Endo(P_i)$ as follows.
	Let $R_{1,int}$ be the poset $R_1$ with its minimal and maximal elements removed. 
	Let
	\begin{equation*}
		F_x(z) 	= 	\begin{cases*}
						z 	& if $z\notin R_{1,int}$ \\
						x 	& if $z\in R_{1,int}$.
					\end{cases*}
	\end{equation*}

	In other words, $F_x$ contracts $R_{1,int}$ to the point $x$, and leaves all other points fixed. 
	It follows that for all $\phi \in \Auto(P_i)$, the map $F_x \circ \phi $ is an endomorphism of $P_i$. 
	If $x$ and $x'$ are different elements in $P_i$, then  
	\[ F_x \circ \phi \neq F_{x'} \circ \phi' \]
	regardless of whether $\phi$ and $\phi'$ are identical or not. Indeed for the endomorphisms on both sides, there is one element that has more than one preimage, 
but that element is not the same on the two sides; it is $x$ on the left-hand side and $x'$ on the right-hand side.
	For a given $x$, the equality $F_x \circ \phi =F_x \circ \phi '$ can hold only if $\phi$ and $\phi'$ differ only in an automorphism of $R_{1,int}$. 
	As $R_{1,int}$ is a poset of rank two and whidth three, it has at most $3!^3=216$ automorphisms. 

	Therefore, if there are $c$ copies of $R$ in $P_i$, then the inequality 
	\[ \frac{|\Auto(P_i)|}{|\Endo(P_i)|} \leq \frac{216}{c} \] 
	holds. 
	Indeed, each automorphism of $P_i$ gives rise to $c$ endomorphisms, and no endomorphism will be obtained more than 216 times in this way. 
	As $c$ goes to infinity, this implies that $\frac{|\Auto(P_i)|}{|\Endo(P_i)|}$ goes to 0 as claimed. 
\end{proof}

\section{whidth 4}
\begin{theorem}
	Every infinite sequence of distinct posets in $W_4$ contains an infinite subsequence $P_1,P_2,\ldots $ of posets so that  
	\[ \lim_{i\rightarrow \infty} \frac{|\Auto(P_i)|}{|\Endo(P_i)|} =0 . \]
\end{theorem}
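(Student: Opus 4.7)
The plan is to follow the template of Theorems~\ref{whidth2} and~\ref{whidth3}. By Propositions~\ref{singles} and~\ref{siblings} we may assume without loss of generality that no repeating subposet $R$ in the sequence contains an up-single element, a down-single element, or an older sibling pair. The task is then to construct, for each automorphism $\phi$ of $P_i$, a growing collection of distinct endomorphisms via local contractions attached to copies of $R$.

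My first attempt would be to imitate the whidth-$3$ argument literally: take $R$ of rank~$4$, pick a middle element $x$ in a copy $R_1$, and contract $R_{1,int}$ to $x$. That proof leaned on the rank-$2$ comparability claim: two sets of size $\ge 2$ in a universe of size $\le 3$ must intersect, forcing any two elements of $R$ at rank distance $2$ to be comparable. In whidth~$4$ this claim collapses --- two size-$2$ sets in a universe of size $4$ can be disjoint --- and the central-element contraction is no longer automatic. Replacing this claim by a workable substitute is the main obstacle.

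The approach I would pursue is to prove a rank-distance-$d$ comparability claim for some absolute constant $d$ depending only on the whidth. For a fixed $s \in R$ at rank $r$, track $S_j := \{u \in P_i : r(u) = r+j,\ s \le u\}$. The no-up-single assumption gives $|S_{j+1}| \ge 2$ whenever $S_j$ is nonempty, and the heart of the argument is that, under the no-sibling assumption, $|S_j|$ cannot remain strictly below the rank size $w_{r+j}(P_i)$ for many consecutive values of $j$. A case analysis of the bipartite cover graph between two consecutive ranks shows that the obstruction --- two descendants of $s$ persistently confined to a size-$2$ subset of a size-$4$ rank --- is extremely rigid: either it produces a full sibling pair within a bounded number of ranks (contradicting our assumption), or it creates a pair of ``almost siblings'' whose combined contraction, paired with a correction on one additional element, yields non-automorphism endomorphisms directly in the spirit of Proposition~\ref{siblings}. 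Working through the configurations of the cover graph is the technical heart of the proof.

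Once the rank-distance-$d$ claim (or an explicit almost-sibling construction) is in hand, the counting step is routine. Take $R$ of rank $m = 2d$; in each copy $R_1$ choose $x$ at the central rank $d$ and define $F_x$ sending $R_{1,int}$ to $x$ and fixing the rest. Comparability at distance $d$ makes $F_x$ order-preserving, because any element of $P_i$ outside $R_{1,int}$ comparable to something inside $R_{1,int}$ is at rank distance $\ge d$ from $x$ on the appropriate side, after routing through a boundary element of $R_1$. Each automorphism $\phi$ then yields $c(P_i)$ endomorphisms $F_x \circ \phi$, one per copy of $R$ in $P_i$, with multiplicity bounded by $|\Auto(R_{1,int})|$, a constant depending only on $R$. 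Since $c(P_i) \to \infty$ by Proposition~\ref{repeating}, this gives $|\Auto(P_i)|/|\Endo(P_i)| \to 0$.
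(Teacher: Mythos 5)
Your road map coincides with the paper's actual proof in its overall architecture: the same reductions via Propositions~\ref{singles} and~\ref{siblings}, then a dichotomy between a central-element contraction generalizing Theorem~\ref{whidth3} and an ``almost-sibling'' swap for the rigid obstruction. However, the headline technical goal you set --- a rank-distance-$d$ comparability claim for some absolute constant $d$ --- is simply false in $W_4$, and the paper's final case exhibits the counterexample: two parallel double ladders, with $x_i, y_i$ the only elements above $x$ at each rank and $z_i, t_i$ covering only $\{z_{i-1}, t_{i-1}\}$, joined by a single crossing at the bottom ($x_2$ covers $z_1$, $y_2$ covers $t_1$). In that configuration $x$ lies below exactly two elements of \emph{every} higher rank, so $x$ and $z_d$ are incomparable for every $d$, no choice of constant restores comparability, and no central element exists at any scale. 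This configuration also survives your growth heuristic (the up-sets $S_j$ stay stuck at size $2$ forever) without ever producing a genuine sibling pair, because $x_2$ and $y_2$ differ in exactly one lower cover. So your hedged second branch --- almost-siblings plus a correction on one extra element --- is not a fallback but the unavoidable case, and the dichotomy itself is exactly what requires proof.

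That is where the genuine gap lies: you explicitly defer ``working through the configurations of the cover graph,'' but at whidth $4$ that case analysis \emph{is} the theorem; everything else is the whidth-$3$ template. The paper executes it as follows: a saturation step (an element below three elements of rank $r$ is below all four elements of rank $r+1$, since no element is down-single), reduction to elements seeing exactly two elements of each rank above (or below), a split on $|S|\in\{2,3,4\}$ for the joint up-neighborhood of two same-rank elements $x,y$, contradictions with the no-twin assumption killing $|S|=3$ and two of the three $|S|=4$ subcases, a central element extracted from the remaining $|S|=4$ subcase, and, in the $|S|=2$ ladder case, a derivation that $x_2$ and $y_2$ must each cover one additional element (this step needs the full no-older-sibling hypothesis, not merely no-twins) and that $z_1, t_1$ share two common lower covers $z_0, t_0$ --- which is precisely what makes the swap $f(x_2)=y_2$, $f(z_1)=t_1$ order-preserving. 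None of this is routine bookkeeping, and your sketch supplies none of it. As written, your proposal is a correct plan that happens to match the paper's strategy, but the decisive step is asserted rather than proved, so it is a blueprint, not a proof.
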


\begin{proof}
Throughout this proof, keep in mind that by Proposition \ref{repeating}, for any positive integer $m$, a repeating rank-selected subposet $R$ of rank at least $m$ exists.

	Let $R$ be a repeating subposet in our sequence.
	\begin{itemize}
		\item 	Note that $R$ cannot contain an up-single or a down-single element, otherwise the proof follows from Proposition~\ref{singles}. 
		\item 	Also note that $R$ cannot contain vertices $a$ and $b$ so that $b$ is an older sibling of $a$, otherwise the proof follows from Proposition~\ref{siblings}. In particular, $R$ cannot contain vertices $a$ and $b$ that are twins. 
	\end{itemize}  
	We can assume that our sequence does not contain such repeating subposets. 

	Now let us assume that our sequence contains a repeating subposet $R$ that contains an element $x$ so that $a<x<b$ for every element $a\in R$ for which $r(x)-r(a)=r_1$, and for every $b\in R$ for which $r(b)-r(x)=r_2$, for some fixed integers $r_1$ and $r_2$. 
	In other words, $x$ is larger than everything that is $r_1$ levels below it, and smaller than everything that is $r_2$ levels above it. 
	Let us call such a repeating subposet a {\em good} repeating subposet, and let us call an element $x$ with the described property a {\em central} element.  
	Note that this phenomenon was discussed in the proof of Theorem~\ref{whidth3}, in the special case of $r_1=r_2=2$. 
	In this case, the proof can be completed as the proof of Theorem~\ref{whidth3}, namely by defining endomorphisms $F_x$ that shrink the entire interior of such an $R$ to $x$ and leave everything else fixed. 

	Can we always find a good repeating subposet? 
	If $x$ is an element that is smaller than three elements of rank $r$, then $x$ is smaller than all four elements of rank $r+1$. 
	Indeed, suppose $y$ is a rank $r+1$ element such that $x\not< y$.
	As $y$ is not down-single, it is larger than at least two elements of rank $r$, and the set of those two elements cannot be disjoint from the set of rank $r$ elements that are larger than $x$. 
	Similarly, if $x$ is larger than three elements of rank $s$, then $x$ is larger than all  elements of rank $s-1$. 

	Therefore, if our sequence contains a repeating subposet $R$ so that there is an element $x\in R$ that is larger than three elements of $R$ having rank $r(x)-r_1$, and is smaller than three elements of $R$ having rank $r(x)+r_2$, and $R$ has some elements of rank less than $r(x)-r_1$, and $R$ has some elements of rank more than $r(x)+r_2$, then $R$ is a good repeating subposet and $x$ is a central element in $R$. 

	Since no repeating poset $R$ contains up-single or down-single elements, the only remaining cases are when each element in repeating posets has at least one of the following two properties: (a) it is larger than exactly two elements of each rank below it or (b) it is smaller than exactly two elements of each rank above it. 

	Let $x$ and $y$ be two  elements of the same rank $r$ in a repeating subposet $R$. We can assume that two such elements exist; otherwise, if $x$ is the only element of rank $r\in R$, then all of $R$ can be shrunk to 
$x$
by an endomorphism. As the number of copies of $R$ goes to infinity, it will eventually be larger than any bound $B$. That implies that every automorphism in our poset can be turned into more than $B$ endomorphisms by shrinking a copy of $R$ to $x$. At least $B/|\Auto(R)|$ of these automorphisms will be different.    Similarly, we can assume that two such elements exist so that either both belong to case (a) or both belong to case (b) in the previous paragraph; otherwise, there could be only two elements of rank $r\in R$.
(If that holds for rank $r-1$ and rank $r+1$ as well, then the two elements of rank $r$ are twins.)  Let us assume without loss of generality that each of $x$ and $y$ has  the property that it is smaller than exactly two elements of each rank above them in $R$. 
	For convenience, let $r$ be the smallest rank for which such elements $x$ and $y$ exist.
	This means in particular that there are two elements of rank $r+1$ that are larger than $x$, and two elements of rank $r+1$ that are larger than $y$. 
	So, if $S$ is the set of all elements of rank $r+1$ that are larger than at least one of $x$ and $y$, then $S$ can have two, three, or four elements.
	We will consider these cases separately.  

	\begin{enumerate}
		\item 	If $|S|=4$, then let  $\{x_{1,1},x_{1,2}\}$ denote the elements covering $x$, and let  $\{y_{1,1},y_{1,2}\}$ denote the elements covering $y$.
 The sets $\{y_{1,1},y_{1,2}\}$ and  $\{y_{1,1},y_{1,2}\}$  are disjoint, since we are in the case $|S|=4$. 
				Note that the 2-element set of vertices covering $x_{1,1}$ has to be the same as the 2-element set of vertices covering $x_{1,2}$; otherwise $x$ would be smaller than at least three elements of rank $r+2$. 
				Call these elements $x_{2,1}$ and $x_{2,2}$. Analogously, define $x_{3,1}$ and $x_{3,2}$.

				Similarly, the two vertices covering $y_{1,1}$ must be the same as the two vertices covering $y_{1,2}$. 
				Call these vertices $y_{2,1}$ and $y_{2,2}$.  Analogously, define $x_{3,1}$ and $x_{3,2}$.

				We are going to distinguish three cases, based on the size of the intersection $\{x_{2,1},x_{2,2}\} \cap \{y_{2,1},y_{2,2}\}$. 

			\begin{itemize} \item Let us first assume that $\{x_{2,1},x_{2,2}\}$ and  $\{y_{2,1},y_{2,2}\}$ are disjoint. 	Note that $x_{2,1}$ and $x_{2,2}$ are twins. 
				Indeed, they both cover $x_{1,1}$ and $x_{1,2}$, and they cannot cover $y_{1,1}$ or $y_{1,2}$, since that would mean that those vertices are covered by more than the two elements $y_{2,1}$ and $y_{2,2}$, and so $y$ is covered by more than two elements of rank $r+2$. 
				Furthermore, $x_{2,1}$ and $x_{2,2}$ are both covered by $x_{3,1}$ and $x_{3,2}$ and nothing else.  However, we assumed that there are no twins in $R$, so we reached a contradiction.
\item Let us now assume that $|\{x_{2,1},x_{2,2}\} \cap \{y_{2,1},y_{2,2}\}|=1$. We can assume without loss of generality that $x_{2,2}=y_{2,1}=z$ is the unique element of that 
intersection. Then it follows from our
definitions that $z$ is larger than all four elements of $P$ that are of rank $r+1$, and it is smaller than both elements of $r+3$, namely $x_{3,1}$ and $x_{3,2}$, which are the same
as $y_{3,1}$ and $y_{3,2}$. Indeed, there cannot be any additional elements $s$ of rank $r+3$, since such an $s$ would have to be larger than an element $s'$ of rank $r+2$, implying
that $x$ or $y$ is less than $s'$, contradicting our assumptions. So $z$ is a central element in $R$.
 \item  Let us finally assume that $\{x_{2,1},x_{2,2}\}$ and  $\{y_{2,1},y_{2,2}\}$ are identical. 
Then each of $x_{2,1}$ and $x_{2,2}$ cover all four elements of rank $r+1$ in $R$.  If the upper neighbors of $x_{2,1}$ and $x_{2,2}$ are not identical, then together they have at least three upper neighbors, meaning that $x$ (and $y$) are smaller than three elements of rank $r+3$, which contradicts our hypothesis.  If the upper neighbors of $x_{2,1}$ and $x_{2,2}$ are identical, then $x_{2,1}$ and $x_{2,2}$ are twins. This contradicts our assumption that there are no twins in $R$. 
\end{itemize}

		\item 	If $|S|=3$, then let $x_1$ be the element that covers $x$, but not $y$, let $y_1$ be the element that covers $y$, but not $y$, and let $z$ be the element that covers both $x$ and $y$. 
				Then $z$ must be covered by exactly two elements, otherwise $x$ and $y$ are less than three or more elements of rank $r+2$. 
				Let $z_1$ and $z_2$ be the two elements covering $z$; then $z_1$ and $z_2$ must also be the two elements covering $x_1$, and the two elements covering $y_1$ (otherwise again, $x$ and $y$ are smaller than three or more elements of rank $r+2$). 
				However, this leads to a contradiction, since if $h$ is another element of rank $r+2$, then $h$ must cover at least two elements of rank $r+1$, so it must cover at least one of $z$, $x_1$, and $x_2$, implying that either $x$ or $y$ is less than three vertices of rank $r+2$.  

		\item 	Finally, suppose that  $|S|=2$ and moreover, $R$ has no rank that has two vertices for which the set of elements that cover at least one of them is of size $3$ or $4$.  
				Denote the elements of $S$ by $x_1$ and $y_1$. 
				As each of $x$ and $y$ has exactly two elements of rank $r+2$ larger than it, it follows that $x_1$ and $y_1$ are smaller than precisely those two elements of rank $r+2$, which we call $x_2$ and $y_2$. 
				Repeating this process, we can define $x_j$ and $y_j$ all the way to the top of $R$.

				For each $i\geq r+1$, let $z_i$ and $t_i$ be the two other elements that share the rank of $x_i$ and $y_i$. (If, for some $i$, there are no elements other
than $x_i$ and $y_i$ that are of rank $i$, then the elements $x_{i+1}$ and $y_{i+1}$ are twins. If there is an $i$ so that there is only one additional element $z_i$ of rank $i$, then element
$z_{i+1}$ has to cover at least two elements of rank $i$, so it is forced to cover $x_i$ or $y_i$, which is a contradiction.)
				For each $i$, the elements $z_i$ and $t_i$ have to cover at least two elements, but they cannot cover $x_{i-1}$ and $y_{i-1}$ (because then two elements would be covered by more than two elements, putting us in one of the previous cases), so they must cover $z_{i-1}$ and $t_{i-1}$. 

				Consider $x_2$ and $y_2$. 
				If they only cover $x_1$ and $y_1$, then they are twins, which contradicts our assumptions. 
				If the set of elements covered by one is a subset of the elements covered by the other, then one of them is an older sibling of the other, which again contradicts our assumptions. 
				So it has to be that $x_2$ and $y_2$ each cover one additional element, and we can assume without loss of generality that $x_2$ covers $z_1$ and $y_2$ covers $t_1$. Then $z_1$ and $t_1$ must each cover two elements of rank $r$, and they cannot cover $x$ and $y$. Therefore, there must be two additional elements $z_0$ and $t_0$ of rank $r$, and each of $z_1$ and $t_1$ must cover both of them. 

				Then the map $f$ defined by $f(x_2)=y_2$, $f(z_1)=t_1$, and $f(h)=h$ for all other elements of $P_i$ is an endomorphism of $P_i$. 
				The proof is now finished \`a la the proof of Theorem~\ref{whidth3}. That is, if $\phi \in \Auto( P_i)$, then $f\circ \phi \in \Endo(P_i)$, for $f$ defined as above using
a given copy of $R$, the number of {\em different} endomorphisms of the form  $f\circ \phi $ is at least half $|\Auto( P_i)|$, but the number of choices for the copy of $R$ that we use
goes to infinity. 
	\end{enumerate}
\end{proof}

\section{Conclusion}

Schr\"oder~\cite{schroder-1} remarks that it is not known if $\frac{|\Auto(P)|}{|\Endo(P)|}\leq 1/2$ for all posets $P$ on at least two elements. The arguments from~\cite{bona} establish that this is true for posets that have a zero. Moreover,  it follows that $\frac{|\Auto(P)|}{|\Endo(P)|}\leq 1/3$ for all posets $P$ on at least three elements that have a zero. To see this, let the set of atoms be $A$, where an atom is an element that covers zero. If $|A|\geq 2$, then for every automorphism, one can compose that automorphism with mapping at most one of the atoms to zero. If $|A|=1$, then one can compose every automorphism with mapping that atom or its parent to zero. In either case, this results in three times as many endomorphisms as automorphisms. 

We note that the case where the sequence of posets consists of those that are of height-2 (that is, rank-1) is also unresolved for the endomorphism conjecture.

\section{Acknowledgements}

We would like to thank anonymous referees for insightful and important comments, including the observation that it is worth noting that the above results held for graded posets whose largest Whitney number is at most 4, which resulted in coining the term ``whidth''.

\end{document}